\newtheorem{prethm}{{\bf Theorem}}
\newenvironment{theorem}{\begin{prethm}{\hspace{-0.5
               em}{\bf.}}}{\end{prethm}}
\newtheorem{prelem}{{\bf Lemma}}
\newtheorem{prelm}{{\bf Lemma}}
\newenvironment{lm}{\begin{prelm}{\hspace{-0.5
               em}{\bf.}}}{\end{prelm}}                              
\newtheorem{prepro}{{\bf Proposition}}
\newtheorem{precor}{{\bf Corollary}}
\newtheorem{preobserv}{{\bf Observation}}
\newenvironment{observation}{\begin{preobserv}{\hspace{-0.5
               em}{\bf.}}}{\end{preobserv}}
\newtheorem{preconj}{{\bf Conjecture}}
\newtheorem{preremark}{{\bf Remark}}
\newtheorem{predef}{{\bf Definition}}
\newtheorem{preproof}{{\bf Proof.}}
\newenvironment{proof}[1]{\begin{preproof}{\rm
               #1}\hfill{$\Box$}}{\end{preproof}}
\newcommand{\sle}{{\rm SLEE}}
\renewcommand{\thefootnote}
\begin{document}
\baselineskip=0.30in
\vspace*{6cm}
\begin{center}
{\Large  On the Randi$\acute{\mbox{c}}$ and degree distance indices of the Mycielskian of a graph}

\vspace{5mm}
{\large  Ali Behtoei$^*$, Mahdi Anbarloei}

\vspace{4mm}
\baselineskip=0.20in
{\it Department of Mathematics, Imam Khomeini International University,} \\
{\it  P.O. Box: 34149-16818, Qazvin, Iran} \\
\footnote{* Corresponding author}
\footnote{e-mail: a.behtoei@sci.ikiu.ac.ir, m.anbarloei@sci.ikiu.ac.ir}
\footnote{Keywords: Degree distance, Randi$\acute{c}$ index, Mycielskian.}
\footnote{Mathematics Subject Classification: 05C12, 05C07.}
\thispagestyle{empty}
\vspace{6mm}
\end{center}
\begin{abstract}
 In a search for triangle-free graphs with arbitrarily large chromatic numbers, Mycielski developed a graph transformation that transforms a graph  into a new graph  which is called the Mycielskian of that graph. In this paper we provide some sharp bounds for the Randi$\acute{\mbox{c}}$ index of the Mycielskian graphs.
Also, we determine the degree distance index of the Mycielskian of each graph with diameter two. 
\end{abstract}
\baselineskip=0.30in
\section{Introduction}

Throughout this paper we consider simple graphs, that are finite and undirected graphs without loops or multiple edges.
 Let $G=(V (G),E(G))$ be a connected graph of order $n=|V(G)|$ and of size $m=|E(G)|$.
The distance between two vertices $u$ and $v$ is denoted by  $d_G(u,v)$  and is the length of a shortest path between $u$ and $v$ in $G$.
The diameter of $G$ is $\max\{d_G(u,v):~u,v\in V(G)\}$. It is well known that almost all graphs have diameter two.
When $u$ is a vertex of $G$, then the neighbor of $u$ in $G$ is the set $N_G(u)=\{v:~uv\in E(G)\}$.
The degree of  $u$  is the number of edges adjacent to $u$ and is denoted by $\deg_G(u)$ .
A graph is said to be regular if all of its vertices have the same degree.

In a search for triangle-free graphs with arbitrarily large chromatic number, 
Mycielski \cite{Mycielski} developed an interesting graph transformation as follows. 
For a graph $G=(V,E)$, the {\it Mycielskian} of G is the graph $\mu(G)$, or simply $\mu$, with the disjoint union $V\cup X\cup \{x\}$
as its vertex set and $E\cup \{v_ix_j:~v_iv_j\in E\}\cup \{xx_j:~1\leq j\leq n\}$ as its edge set, 
where $V=\{v_1,v_2,...,v_n\}$ and  $X=\{x_1,x_2,...,x_n\}$.
The Mycielskian and generalized Mycielskians have fascinated graph theorists a great deal. This has resulted
in studying several graph parameters of these graphs (see for instance  \cite{Fisher}).

A {\em chemical graph} is a graph whose vertices denote atoms and edges denote bonds between those atoms of any underlying chemical structure. 
A {\it topological index} for a (chemical) graph $G$ is a numerical quantity invariant under
automorphisms of $G$ and it does not depend on the labeling or pictorial
representation of the graph. 
Topological indices and graph invariants based on the distances between vertices of a graph or vertex degrees are widely used for
characterizing molecular graphs, establishing relationships between structure and properties of molecules, predicting
biological activity of chemical compounds, and making their chemical applications. These indices may be used
to derive quantitative structure-property or structure-activity relationships (QSPR/QSAR).

The concept of topological index came from work done by Harold Wiener in 1947 while he was working on boiling point
of paraffin \cite{Wiener}. He named this index as path number. Later on, path number was renamed as {\it Wiener index}  and then theory of
topological index started. The Wiener index of $G$ is defined as
$W(G)=\sum_{\{u,v\}\subseteq V(G)} d_G(u,v)$.
The very first and oldest degree based topological index is {\it Randi$\acute{\mbox{c}}$ index} \cite{Randic} denoted by $R(G)$ and introduced by Milan
Randi$\acute{\mbox{c}}$ in 1975 as
$$R(G)=\sum_{uv\in E(G)}{1\over \sqrt{\deg_G(u)~\deg_G(v)}}.$$

It has been closely correlated with many chemical properties.
The general Randic index was proposed by Bollobás and Erd$\ddot{\mbox{o}}$s \cite{Bollobas} and Amic et al. \cite{Amic} independently, in 1998. Then it has
been extensively studied by both mathematicians and theoretical chemists \cite{MATCH-Randic}, \cite{HuGutman}. 
For a survey of results, we refer to the new book by Li and Gutman \cite{LiGutman}.
An important topological index introduced about forty years ago 
by Ivan Gutman and Trinajsti$\acute{\mbox{c}}$  \cite{GutmanTrinajstic}  is the {\it Zagreb index} or
more precisely {\it first zagreb index} denoted by $M_1(G)$ and was defined as the sum of degrees of end vertices of all edges of G,
$$M_1(G)=\sum_{uv\in E(G)}(\deg_G(u)+\deg_G(v))=\sum_{x\in V(G)}(\deg_G(x))^2.$$

The {\it degree distance} was introduced by Dobrynin and Kochetova \cite{Dobrynin} and Gutman \cite{Schultz-GutmanIndex} as a weighted version of the
Wiener index. The degree distance of G, denoted by DD(G), is defined as follows and it is computed for important families of graphs ( see\cite{MATCH-Degreedistance} for instance):
$$DD(G)=\sum_{\{ u,v \} \subseteq V(G)} d_G(u,v)~(\deg_G (u)+\deg_G(v)).$$

In this paper we provide upper and lower bounds for the Randic index of the Mycielskian graphs.
Also, we determine the degree distance index of the Mycielskian of each graph with diameter two. 
Up to now and as far as we know, these parameters are not determined for the Mycielskian graphs.

\section{The Degree Distance index}

In order to determine the degree distance index of the Mycielskian graph, we need the following observations. 
Throughout this paper we suppose that $G$ is a connected graph, $V(G)=\{v_1,v_2,...,v_n\}$, 
$X=\{ x_1,x_2,...,x_n\}$,  $V(G)\cap X=\emptyset$, $x\notin V(G)\cup X$,
and
$$V(\mu)=V(G)\cup X\cup\{x\}, ~E(\mu)=E(G)\cup \{v_ix_j:~v_iv_j\in E(G)\}\cup\{xx_i:~1\leq i \leq n\}.$$
\begin{observation}  \label{MycielskiDegree}
 Let $\mu$ be the Mycielskian of $G$. Then for each $v\in V(\mu)$ we have
 \begin{eqnarray*}
 \deg_\mu (v)=
 \begin{cases}
 n & v=x  \\
 1+\deg_G(v_i) & v=x_i    \\
 2\deg_G(v_i) & v=v_i.
 \end{cases}
 \end{eqnarray*}
 \end{observation}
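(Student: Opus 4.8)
The plan is to argue by a direct case analysis according to which of the three classes $\{x\}$, $X$, $V(G)$ the vertex $v\in V(\mu)$ lies in, reading the neighbourhood of $v$ straight off the three blocks $E(G)$, $\{v_ix_j:~v_iv_j\in E(G)\}$ and $\{xx_i:~1\le i\le n\}$ that constitute $E(\mu)$. First I would dispose of $v=x$: the symbol $x$ occurs only in the third block, where it is joined to every $x_i$ with $1\le i\le n$, and it occurs in neither $E(G)$ nor the second block; hence $N_\mu(x)=X$ and $\deg_\mu(x)=|X|=n$.

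Next, for $v=x_i$, I would first note that $X$ is an independent set in $\mu$, since no block of $E(\mu)$ contains an edge with both endpoints in $X$; thus the neighbours of $x_i$ come only from the second and third blocks. The third block contributes exactly the one vertex $x$. The second block contributes those $v_j$ with $v_jv_i\in E(G)$, i.e. the $V(G)$-copies of the $G$-neighbours of $v_i$, which is a set of size $\deg_G(v_i)$ disjoint from $\{x\}$; adding, $\deg_\mu(x_i)=\deg_G(v_i)+1$. For $v=v_i$, the vertex $v_i$ is not adjacent to $x$ (the third block meets only $X$), so its neighbours split between $E(G)$ and the second block: within $E(G)$ they are precisely the original $G$-neighbours of $v_i$, contributing $\deg_G(v_i)$, and from the second block $v_i$ is adjacent to $x_j$ exactly when $v_iv_j\in E(G)$, contributing another $\deg_G(v_i)$ vertices lying in $X$ and hence disjoint from the first set. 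Therefore $\deg_\mu(v_i)=2\deg_G(v_i)$, which exhausts the three cases.

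The only point deserving care is the bookkeeping for the second block: one must observe that for the purpose of counting neighbours the two indices play symmetric roles, so $v_i$ sees $x_j$ iff $x_i$ sees $v_j$ iff $v_iv_j\in E(G)$, and that the neighbour sets contributed by distinct blocks are pairwise disjoint, so their sizes simply add. Beyond this routine verification there is no real obstacle.
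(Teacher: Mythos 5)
Your case analysis is correct and is exactly the routine verification that the paper leaves implicit, since Observation \ref{MycielskiDegree} is stated without proof: reading the three blocks of $E(\mu)$ and checking that the neighbour sets they contribute are disjoint gives $\deg_\mu(x)=n$, $\deg_\mu(x_i)=1+\deg_G(v_i)$ and $\deg_\mu(v_i)=2\deg_G(v_i)$ just as you write. Nothing is missing.
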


\begin{observation}  \label{MycielskiDistance}
Distances between the vertices of  the Mycielskian $\mu$ of $G$ are given as follows.
For each $u,v\in V(\mu)$ we have
\begin{eqnarray*}
d_\mu(u,v)=
\left\{   \begin{array}{ll}
1 & u=x,~v=x_i \\
2 & u=x,~v=v_i \\
2 & u=x_i,~v=x_j \\
d_G(v_i,v_j) &  u=v_i,~v=v_j,~d_G(v_i,v_j)\leq 3 \\
4 & u=v_i,~v=v_j,~d_G(v_i,v_j)\geq 4 \\
2 & u=v_i,~v=x_j,~i=j \\
d_G(v_i,v_j) & u=v_i,~v=x_j,~i\neq j,~d_G(v_i,v_j)\leq 2 \\
3 & u=v_i,~v=x_j,~i\neq j,~d_G(v_i,v_j)\geq 3.
\end{array} \right.
\end{eqnarray*}
\end{observation}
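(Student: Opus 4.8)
The plan is to first read off, straight from the definition, the adjacency structure of $\mu$: $N_\mu(x)=X$; $N_\mu(x_i)=\{x\}\cup\{v_j: v_jv_i\in E(G)\}$, so that $X$ is an independent set and $x_i\not\sim v_i$; and $N_\mu(v_i)=\{v_j: v_jv_i\in E(G)\}\cup\{x_j: v_jv_i\in E(G)\}$. This already pins down every distance equal to $1$ (the pair $x,x_i$, and the pair $v_i,x_j$ precisely when $v_iv_j\in E(G)$). It also gives $d_\mu(x,v_i)\le 2$ (via $x,x_k,v_i$ for some $v_k\in N_G(v_i)$, which exists because $G$ is connected with $n\ge 2$), $d_\mu(x_i,x_j)\le 2$ (via $x_i,x,x_j$), and $d_\mu(v_i,x_i)\le 2$ (via $v_i,v_k,x_i$ for some $v_k\in N_G(v_i)$); since none of these three pairs is adjacent, each of these distances equals $2$.

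For the remaining cases I would set up two lower-bound devices. First, $G$ is an induced subgraph of $\mu$, hence $d_\mu(v_i,v_j)\le d_G(v_i,v_j)$ for all $i,j$. Second, the map $f\colon V(\mu)\setminus\{x\}\to V(G)$ defined by $f(v_i)=f(x_i)=v_i$ sends every edge of $\mu-x$ onto an edge of $G$: the edges of $\mu-x$ are the $v_iv_j\in E(G)$ and the $v_ix_j$ with $v_iv_j\in E(G)$, and in both cases the image pair is $v_iv_j$. Thus $f$ turns any walk in $\mu-x$ into a walk of the same length in $G$, so any $u$--$w$ walk in $\mu$ that avoids $x$ has length at least $d_G(f(u),f(w))$. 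I would also record the ``cost of using $x$'': in a path through $x$ both path-neighbours of $x$ lie in $X$, and each such $x_a$ has all its non-$x$ neighbours in $V$; so a $v_i$--$v_j$ path through $x$ has length $\ge 4$ and a $v_i$--$x_j$ path through $x$ has length $\ge 3$.

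With these in place the eight cases become mechanical. For $u=v_i,\ v=v_j$: if $d_G(v_i,v_j)\le 3$ then $d_\mu\le d_G\le 3<4$, so a shortest path avoids $x$, whence $d_\mu\ge d_G$ by $f$ and equality holds; if $d_G(v_i,v_j)\ge 4$ then every $x$-avoiding path has length $\ge d_G\ge 4$ and every $x$-using path has length $\ge 4$, so $d_\mu\ge 4$, while $v_i,x_a,x,x_b,v_j$ with $a\in N_G(v_i),\ b\in N_G(v_j),\ a\ne b$ is a genuine path of length $4$ (such $a\ne b$ exist, for otherwise $N_G(v_i)=N_G(v_j)$ would be a single common neighbour, forcing $d_G(v_i,v_j)\le 2$). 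For $u=v_i,\ v=x_j$ with $i\ne j$: if $d_G(v_i,v_j)\le 2$, then when $d_G=1$ the vertices are adjacent in $\mu$, and when $d_G=2$ a common $G$-neighbour $v_k$ gives the path $v_i,v_k,x_j$ while non-adjacency of $v_i,x_j$ forces $d_\mu\ge 2$; if $d_G(v_i,v_j)\ge 3$, then $v_i,x_a,x,x_j$ (any $a\in N_G(v_i)$) shows $d_\mu\le 3$, and $d_\mu\ge 3$ since an $x$-avoiding walk has length $\ge d_G(v_i,v_j)\ge 3$ by $f$ and an $x$-using path has length $\ge 3$.

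The computations are routine; the single spot that genuinely needs both devices at once — together with a small side argument — is the value $4$ for two $V$-vertices at $G$-distance $\ge 4$: the lower bound splits into the ``avoids $x$'' case (handled by $f$) and the ``uses $x$'' case (handled by the cost remark), and the matching upper bound rests on the observation that one can choose distinct neighbours $a\ne b$, which fails only in a configuration that itself contradicts $d_G(v_i,v_j)\ge 4$.
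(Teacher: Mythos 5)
Your proof is correct. Note that the paper states this result as an Observation and supplies no proof at all, so there is nothing to compare against; your argument — the adjacency description of $\mu$, the edge-preserving projection $f$ of $\mu-x$ onto $G$ giving lower bounds for paths avoiding $x$, the length-$\geq 4$ (resp.\ $\geq 3$) cost of any $v_i$--$v_j$ (resp.\ $v_i$--$x_j$) path through $x$, and explicit short paths for the upper bounds, including the check that distinct neighbours $a\neq b$ can be chosen when $d_G(v_i,v_j)\geq 4$ — is a complete and sound verification of all eight cases.
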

Note that there are $|E(G)|$ unordered pairs of vertices in $V(G)$ whose distance is 1, 
$$ | \left\{ \{u,v\}\subseteq V(G): ~ d_G(u,v)=1 \right\} | = |E(G)|. $$
Also,
$$\sum_{ \substack{ \{u,v\}\subseteq V(G) \\  d_G(u,v)=1 }}  (\deg_G(u)+\deg_G(v))=
\sum_{uv\in E(G) } (\deg_G(u)+\deg_G(v))=M_1(G).$$
 It is well known that almost all graphs have diameter two. This means that graphs of diameter two play an important role in the theory of graphs.
\begin{lm}  \label{Distance2}
If $G$ is a graph of diameter 2, then
\begin{eqnarray*}
\sum_{ \substack{ \{v_i,v_j\}\subseteq V(G) \\  d_G(v_i,v_j)=2 } }   (\deg_G(v_i)+\deg_G(v_j)) = 2(n-1)  |E(G)| - M_1(G).
\end{eqnarray*}
\end{lm}
\begin{proof}{
Since the diameter of $G$ is two and each vertex $v_i\in V(G)$ has $\deg_G(v_i)$ neighbours in $G$, the number of vertices in $V$ which their distance to $v_i$ is two equals $n-1-\deg_G(v_i)$. This implies that
\begin{eqnarray*}
\sum_{ \substack{ \{v_i,v_j\}\subseteq V(G) \\  d_G(v_i,v_j)=2 } }   (\deg_G(v_i)+\deg_G(v_j)) &=& \sum_{i=1}^n (n-1-\deg_G(v_i)) \deg_G(v_i)  \\
&=&  (n-1) \sum_{i=1}^n \deg_G(v_i) - \sum_{i=1}^n (\deg_G(v_i))^2  \\
&=& 2(n-1)  |E(G)| - M_1(G).
\end{eqnarray*}
}\end{proof}

\begin{theorem}  \label{DegreeDistance}
Let $G$ be an $n$-vertex graph of size $m$ whose diameter is 2.   If  $\mu$  is the Mycielskian of $G$, then the degree distance index of $\mu$ is given by
$$DD(\mu)=4 DD(G)- M_1(G) +( 7n-1)n + (8n+12)m,$$
where, $M_1(G)$ is the first Zagreb index of $G$.
\end{theorem}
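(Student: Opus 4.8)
The plan is to use the vertex partition $V(\mu)=V(G)\cup X\cup\{x\}$ to break the double sum defining $DD(\mu)$ into the six classes of unordered pairs dictated by Observation~\ref{MycielskiDistance}: $\{x,x_i\}$; $\{x,v_i\}$; $\{x_i,x_j\}$ with $i\neq j$; $\{v_i,v_j\}$ with $i\neq j$; $\{v_i,x_i\}$; and $\{v_i,x_j\}$ with $i\neq j$. Writing $S_1,\dots,S_6$ for the corresponding partial sums, we have $DD(\mu)=S_1+\cdots+S_6$, and each $S_k$ is evaluated by substituting the distances from Observation~\ref{MycielskiDistance} and the degrees from Observation~\ref{MycielskiDegree}, using throughout that $G$ has diameter $2$, so $d_G(v_i,v_j)\in\{1,2\}$ for $i\neq j$ and all the ``$\geq 3$ / $\geq 4$'' cases of Observation~\ref{MycielskiDistance} are vacuous.

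I would first dispatch the four easy classes. In $S_1,S_2,S_3,S_5$ the relevant distance is a constant ($1$ or $2$), so each reduces to a linear combination of $n$, $\binom n2$, $\sum_i\deg_G(v_i)=2m$ and $\sum_{i<j}(\deg_G(v_i)+\deg_G(v_j))=(n-1)\sum_i\deg_G(v_i)=2m(n-1)$, giving $S_1=n^2+n+2m$, $S_2=2n^2+8m$, $S_3=2n^2-2n+4mn-4m$, and $S_5=12m+2n$. For $S_4$, Observation~\ref{MycielskiDistance} gives $d_\mu(v_i,v_j)=d_G(v_i,v_j)$ while $\deg_\mu(v_i)+\deg_\mu(v_j)=2(\deg_G(v_i)+\deg_G(v_j))$, so $S_4=2\,DD(G)$ at once.

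The one genuinely delicate class is $S_6$, the $n(n-1)$ pairs $\{v_i,x_j\}$ with $i\neq j$, because $\deg_\mu(v_i)+\deg_\mu(x_j)=2\deg_G(v_i)+1+\deg_G(v_j)$ is not symmetric in $i$ and $j$, so one must be careful running over ordered pairs. I would split the summand as $(\deg_G(v_i)+\deg_G(v_j))+\deg_G(v_i)+1$: the first part contributes $\sum_{i\neq j}d_G(v_i,v_j)(\deg_G(v_i)+\deg_G(v_j))=2\,DD(G)$ since each unordered pair is counted twice; for the remaining parts I would use that, in a diameter-$2$ graph, $\sum_{j\neq i}d_G(v_i,v_j)=\deg_G(v_i)+2(n-1-\deg_G(v_i))=2(n-1)-\deg_G(v_i)$, and then sum this against $1$ and against $\deg_G(v_i)$ using $\sum_i\deg_G(v_i)=2m$, $\sum_i(\deg_G(v_i))^2=M_1(G)$ and Lemma~\ref{Distance2}. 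This yields $S_6=3\,DD(G)+2n^2-2n-2m$; I expect essentially all of the difficulty of the proof to sit in organizing this one computation, the remaining classes being routine bookkeeping.

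Finally I would add up $S_1+\cdots+S_6=5\,DD(G)+7n^2-n+16m+4mn$ and then rewrite a single copy of $DD(G)$ using the diameter-$2$ identity $DD(G)=4(n-1)m-M_1(G)$ — which is nothing but Lemma~\ref{Distance2} applied once more, splitting $DD(G)$ over the pairs at distance $1$ and at distance $2$ — so that $5\,DD(G)=4\,DD(G)+4mn-4m-M_1(G)$. Substituting and collecting terms turns the total into $4\,DD(G)-M_1(G)+7n^2-n+8mn+12m=4\,DD(G)-M_1(G)+(7n-1)n+(8n+12)m$, which is the claimed formula.
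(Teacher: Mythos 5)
Your proposal is correct and follows essentially the same route as the paper: the same six-class pair decomposition via Observations 1 and 2, with the only real work in the $\{v_i,x_j\}$, $i\neq j$, class, handled via the same split of the degree sum and Lemma 1. The only cosmetic difference is bookkeeping in that class — you package its contribution as $3\,DD(G)+2n^2-2n-2m$ and invoke the diameter-2 identity $DD(G)=4(n-1)m-M_1(G)$ at the end, whereas the paper keeps $M_1(G)$ explicit there and arrives at $4\,DD(G)$ directly; both give the stated formula.
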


\begin{proof}{
By the definition of degree distance index, we have
\begin{eqnarray*}
DD(\mu(G))&=&\sum_{\{ u,v \} \subseteq V(\mu)} d_\mu(u,v)~(\deg_\mu (u)+\deg_\mu(v)).
\end{eqnarray*}
Regarding to the different possible cases which $u$ and $v$ can be choosen from the set $V(\mu)$, we  consider the following cases.
We use the same notations as before. For computing degrees and distances two observations  \ref{MycielskiDegree} and   \ref{MycielskiDistance} are applied.
\\
{\large \bf Case 1.} $u=x$ and $v\in X$:
\begin{eqnarray*}
\sum_{i=1}^n d_\mu(x,x_i)~(\deg_\mu(x)+\deg_\mu(x_i))  = \sum_{i=1}^n 1~(n+(1+\deg_G(v_i))) = n(n+1)+2m.
\end{eqnarray*}
{\large \bf Case 2.} $u=x$ and $v\in V(G)$:
\begin{eqnarray*}
\sum_{i=1}^n  d_\mu(x,v_i) ~(\deg_\mu(x)+\deg_\mu(v_i)) = \sum_{i=1}^n 2~(n+2\deg_G(v_i)) = 2~(n^2+4m).
\end{eqnarray*}
{\large \bf Case 3.} $\{u,v\}\subseteq X$:
\begin{eqnarray*}
\sum_{\{x_i,x_j\}\subseteq X} d_\mu(x_i,x_j)~ (\deg_\mu(x_i)+\deg_\mu(x_j))  &=& \sum_{\{x_i,x_j\}\subseteq X} 2~ ((1+\deg_G(v_i))+(1+\deg_G(v_j)))  \\
&=& 2 \left( \sum_{\{x_i,x_j\}\subseteq X} 2 + \sum_{\{x_i,x_j\}\subseteq X} (\deg_G(v_i)+\deg_G(v_j))  \right)  \\
&=& 2 \left( 2\binom{n}{2} +  \sum_{k=1}^n (n-1) \deg(x_k) \right)  \\
&=& 2n^2-2n+4(n-1)m,
\end{eqnarray*}
Note that for each $x_k\in X$ we have $|\{ \{x_k,x_j\}:~j\neq k\}|=n-1$.
\\
{\large \bf Case 4.} $\{u,v\}\subseteq V(G)$:
\\
Since the diameter of $G$ is two,  Observation \ref{MycielskiDistance} implies that $d_\mu (v_i,v_j)=d_G(v_i,v_j)$ for each $v_i,v_j\in V(G)$. Hence,
\begin{eqnarray*}
\sum_{\{v_i,v_j\}\subseteq V(G)} d_\mu(v_i,v_j)~ (\deg_\mu(v_i)+\deg_\mu(v_j)) 
&=&  \sum_{\{v_i,v_j\}\subseteq V(G)} d_G (v_i,v_j)~ (2\deg_G(v_i)+2\deg_G(v_j)) \\
&=& 2~ DD(G).
\end{eqnarray*}
\\
{\large \bf Case 5.} $u=v_i$ and $v=x_i$, $1\leq i\leq n$:
\begin{eqnarray*}
\sum_{i=1}^n ~d_\mu (v_i,x_i)~ (\deg_\mu(v_i)+\deg_\mu (x_i)) &=& \sum_{i=1}^n ~2~ ( 2\deg_G(v_i)+1+\deg_G(v_i) ) \\
&=& 2~(n+6m).
\end{eqnarray*}
\\
{\large \bf Case 6.} $u=v_i$ and $v=x_j$, $i\neq j$:
\begin{eqnarray*}
\sum_{\substack{\{v_i,x_j\} \subseteq V(\mu)  \\ i\neq j}} d_\mu(v_i,x_j)~(\deg_\mu(v_i)+\deg_\mu(x_j)) 
&=& \sum_{\substack{ \{v_i,x_j\} \subseteq V(\mu) \\ i\neq j}} d_\mu(v_i,x_j)~ (2\deg_G(v_i)+1+\deg_G(v_j) )  \\
&=& \sum_{\substack{ \{v_i,x_j\} \subseteq V(\mu) \\ i\neq j}} d_\mu(v_i,x_j)~ (\deg_G(v_i)+\deg_G(v_j))   \\
&+& \sum_{\substack{ \{v_i,x_j\} \subseteq V(\mu) \\ i\neq j } } d_\mu(v_i,x_j)~ (1+\deg_G(v_i)).
\end{eqnarray*}

Since $d_\mu(v_i,x_j)=d_\mu(v_j,x_i)$, and using Observation  \ref{MycielskiDistance}, we have
\begin{eqnarray*}
\sum_{\substack{ \{v_i,x_j\} \subseteq V(\mu) \\ i\neq j}} d_\mu(v_i,x_j)~ (\deg_G(v_i)+\deg_G(v_j)) &=&
2 \sum_{\substack{ \{v_i,v_j\} \subseteq V(G) \\ i\neq j }} d_\mu(v_i,x_j)~ (\deg_G(v_i)+\deg_G(v_j))  \\
&=& 2 \sum_{\substack{ \{v_i,v_j\} \subseteq V(G) \\ i\neq j }} d_G(v_i,v_j)~ (\deg_G(v_i)+\deg_G(v_j))  \\
&=& 2~DD(G).
\end{eqnarray*}
Now since the diameter of $G$ is two,  Lemma \ref{Distance2} implies that
\begin{eqnarray*}
\sum_{\substack{ \{v_i,x_j\} \subseteq V(\mu) \\ i\neq j } } d_\mu(v_i,x_j)(1+\deg_G(v_i)) &=&
\sum_{\substack{ \{v_i,x_j\} \subseteq V(\mu)  \\ d_G(v_i,v_j)=1 }}  1~(1+\deg_G(v_i))  \\
&+&  \sum_{\substack{\{v_i,x_j\} \subseteq V(\mu) \\ d_G(v_i,v_j)=2 }}  2~(1+\deg_G(v_i))  \\
&=&  2 m + \sum_{v_iv_j \in E(G)} (\deg_G(v_i)+\deg_G(v_j)) \\
&+& 2 \left(  2 ({n\choose 2}-m)  +  \sum_{\substack{ \{v_i,v_j\} \subseteq V(G)\\ d_G(v_i,v_j)=2  }} (\deg_G(v_i)+\deg_G(v_j))\right) \\
&=& 2m+M_1(G) \\
&+&  2 ( n(n-1)-2m + 2(n-1)m-M_1(G)) \\
&=&  2n(n-1) + 2m(2n-3)-M_1(G).    
\end{eqnarray*}
Thus, 
\begin{eqnarray*}
\sum_{\substack{\{v_i,x_j\} \subseteq V(\mu) \\ i\neq j}} d_\mu(v_i,x_j)~(\deg_\mu(v_i)+\deg_\mu(x_j))  = 2~DD(G) - M_1(G) + 2n(n-1) + 2m(2n-3).   
\end{eqnarray*}
Therefore, using the cases 1 to 6, we obtain
\begin{eqnarray*}
DD(\mu)&=& \bigg{(}  n(n+1)+2m  \bigg{)}   +   \bigg{(}  2n^2+8m  \bigg{)}  +   \bigg{(}  2n^2-2n+4(n-1)m  \bigg{)} +  \bigg{(} 2~ DD(G)  \bigg{)}  \\ 
&+& \bigg{(}  2n+12m  \bigg{)}  +  \bigg{(}  2~DD(G) - M_1(G) + 2n(n-1) + 2m(2n-3)  \bigg{)}  \\
&=&  4~DD(G) - M_1(G) + (7n-1)n + (8n+12)m.
\end{eqnarray*}
}\end{proof}
\section{The Randi$\acute{\mbox{c}}$ index}

In the following theorem we provide upper and lower bounds for the Randi$\acute{\mbox{c}}$ index of the Mycielskian graph.

\begin{theorem}
Let $G$ be an $n$-vertex graph of size $m$ with maximum degree $\Delta$ and minumum degree $\delta$, whose  Mycielskian graph is $\mu$. Then,
\begin{eqnarray*}
 {1\over 2} R(G) + {\sqrt{2}~m + \sqrt{n\Delta} \over \sqrt{\Delta^2+\Delta}}  
 \leq R(\mu) \leq 
 {1 \over 2} R(G) +  {\sqrt{2}~m + \sqrt{n\delta} \over \sqrt{\delta^2+\delta}}
\end{eqnarray*}
Moreover,  equalities hold if and only if $G$ is a regular graph.
\end{theorem}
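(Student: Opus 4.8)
The plan is to evaluate $R(\mu)$ by splitting $E(\mu)$ into the three natural classes coming from the construction: the \emph{old} edges $v_iv_j\in E(G)$, the \emph{cross} edges $v_ix_j$ with $v_iv_j\in E(G)$, and the \emph{spoke} edges $xx_i$ for $1\le i\le n$. Reading the $\mu$-degrees off Observation~\ref{MycielskiDegree}, each old edge $v_iv_j$ contributes $1/\sqrt{(2\deg_G(v_i))(2\deg_G(v_j))}=\tfrac12\,1/\sqrt{\deg_G(v_i)\deg_G(v_j)}$, so the old edges together contribute exactly $\tfrac12 R(G)$. It then remains to bound the contribution $S$ of the cross and spoke edges and to show $\bigl(\sqrt2\,m+\sqrt{n\Delta}\bigr)/\sqrt{\Delta^2+\Delta}\le S\le\bigl(\sqrt2\,m+\sqrt{n\delta}\bigr)/\sqrt{\delta^2+\delta}$.

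For the spoke edges, $\deg_\mu(x)=n$ and $\deg_\mu(x_i)=1+\deg_G(v_i)$, so their total weight is $\sum_{i=1}^n 1/\sqrt{n(1+\deg_G(v_i))}$; since $\delta\le\deg_G(v_i)\le\Delta$ for every $i$, each summand lies between $1/\sqrt{n(1+\Delta)}$ and $1/\sqrt{n(1+\delta)}$, so the spoke total lies between $\sqrt{n/(\Delta+1)}$ and $\sqrt{n/(\delta+1)}$. For the cross edges I would group them in pairs: the edge $v_iv_j\in E(G)$ gives rise to exactly the two distinct cross edges $v_ix_j$ and $v_jx_i$, with combined weight $1/\sqrt{2\deg_G(v_i)(1+\deg_G(v_j))}+1/\sqrt{2\deg_G(v_j)(1+\deg_G(v_i))}$. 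The key (and essentially only) observation is that each product under a root, $2\deg_G(v_i)(1+\deg_G(v_j))$ or $2\deg_G(v_j)(1+\deg_G(v_i))$, lies between $2\delta(1+\delta)$ and $2\Delta(1+\Delta)$ — monotonicity in each degree separately suffices, no convexity or AM--GM estimate is needed — so the pair contributes between $2/\sqrt{2\Delta(1+\Delta)}=\sqrt2/\sqrt{\Delta^2+\Delta}$ and $\sqrt2/\sqrt{\delta^2+\delta}$. Summing over the $m$ edges of $G$ bounds the cross contribution between $\sqrt2\,m/\sqrt{\Delta^2+\Delta}$ and $\sqrt2\,m/\sqrt{\delta^2+\delta}$. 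Adding the spoke and cross estimates and using the identity $\sqrt{n/(\Delta+1)}=\sqrt{n\Delta}/\sqrt{\Delta^2+\Delta}$ (and the analogue with $\delta$) collapses each side into the single fraction in the statement, finishing the two inequalities.

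For the equality discussion: connectivity of $G$ forces $\delta\ge 1$, so all quantities are well defined, and every vertex of $G$ is an endpoint of some edge, hence occurs in the cross sum as well as in the spoke sum. Equality in the lower bound therefore forces $\deg_G(v_i)=\Delta$ for every endpoint of every edge and for every index in the spoke sum, i.e. for all $i$, so $G$ is $\Delta$-regular; symmetrically, equality in the upper bound forces $G$ to be $\delta$-regular. Conversely, if $G$ is $r$-regular then $\delta=\Delta=r$ and every estimate above becomes an equality, so in fact $R(\mu)=\tfrac12 R(G)+(\sqrt2\,m+\sqrt{nr})/\sqrt{r^2+r}$. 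I expect no genuine obstacle here; the one thing to get right is to resist optimising the two-variable cross-edge weight by a clever inequality, since the crude coordinatewise bound is already sharp and yields precisely the stated constants.
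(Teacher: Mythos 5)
Your proposal is correct and follows essentially the same route as the paper: the same split of $E(\mu)$ into old, cross and spoke edges, the same exact evaluation $\tfrac12 R(G)$ for the old edges, and the same coordinatewise degree bounds $\delta\le\deg_G(v_i)\le\Delta$ for the cross and spoke contributions (your pairing of the cross edges $v_ix_j,\,v_jx_i$ is just a bookkeeping variant of the paper's double sum over $2m$ incidences). The only difference is that you spell out the equality analysis, which the paper asserts in the theorem but does not argue in its proof.
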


\begin{proof}{
 By the definition of Randi$\acute{\mbox{c}}$ indx and by considering the various types of edges in $\mu$, one can see that
\begin{eqnarray*}
R(\mu) &=& \sum_{uv \in E(\mu)}  \frac{1}{\sqrt{\deg_\mu (u)~\deg_\mu (v)}}   \\
&=& \sum_{v_i v_j \in E(\mu)}  \frac{1}{\sqrt{\deg_\mu (v_i)~\deg_\mu (v_j)}}   \\
&+& \sum_{x_iv_j\in E(\mu)}  \frac{1}{\sqrt{\deg_\mu (x_i)~\deg_\mu (v_j)}}   \\
&+& \sum_{xx_i\in E(\mu)}  \frac{1}{\sqrt{\deg_\mu (x)~\deg_\mu (x_i)}}.   
\end{eqnarray*}

Since $N_\mu(x_i)=N_G(v_i)\cup \{x\}$ and using Observation \ref{MycielskiDegree} we get
\begin{eqnarray*}
 \sum_{x_iv_j\in E(\mu)}  \frac{1}{\sqrt{\deg_\mu (x_i)~\deg_\mu (v_j)}} &=&
 \sum_{x_iv_j\in E(\mu)}  \frac{1}{\sqrt{(1+\deg_G(v_i))~(2\deg_G(v_j))}}  \\
&=&      \sum_{i=1}^n \sum_{v_j \in N_G(v_i)} \frac{1}{\sqrt{(1+\deg_G(v_i))(2\deg_G(v_j))}}  \\
&\leq&   \sum_{i=1}^n \sum_{v_j \in N_G(v_i)} \frac{1}{\sqrt{(\delta +1)(2\delta)}}  \\
&=&       \sum_{i=1}^n \frac{\deg(v_i)}{\sqrt{(\delta+1)(2\delta)}}  \\
&=&       \frac{2m}{\sqrt{(\delta+1)(2\delta)}}  \\
&=&       \frac{\sqrt{2}m}{\sqrt{\delta^2+\delta}}.
\end{eqnarray*}
Similarly, we can see that 
\begin{eqnarray*}
 \frac{\sqrt{2}m}{\sqrt{\Delta^2+\Delta}} \leq \sum_{x_iv_j\in E(\mu)}  \frac{1}{\sqrt{\deg_\mu (x_i)~\deg_\mu (v_j)}}.
\end{eqnarray*}
Note that
\begin{eqnarray*}
\sum_{xx_i\in E(\mu)}  \frac{1}{\sqrt{\deg_\mu (x)~\deg_\mu (x_i)}} &=& 
\sum_{xx_i\in E(\mu)} \frac{1}{\sqrt{n(1+\deg_G(v_i))}}  \\  
&=&  \frac{1}{\sqrt{n}} \sum_{i=1}^n \frac{1}{\sqrt{1+\deg_G(v_i)}}.
\end{eqnarray*}
This implies that
$$ \sqrt{ \frac{n}{1+\Delta} }  \leq 
\sum_{xx_i\in E(\mu)}  \frac{1}{\sqrt{\deg_\mu (x)~\deg_\mu (x_i)}}
\leq \sqrt{ \frac{n}{1+\delta } } ~.$$
Now since 
\begin{eqnarray*}
  \sum_{v_i v_j \in E(\mu)}  \frac{1}{\sqrt{\deg_\mu (v_i)~\deg_\mu (v_j)}}
&=& \sum_{v_i v_j \in E(G)} \frac{1}{\sqrt{(2\deg_G(v_i))~(2\deg_G(v_j))}}  \\
&=& \frac{1}{2}R(G),
\end{eqnarray*}
the proof is complete.
}\end{proof}



\end{document}